\documentclass[11pt,twoside,reqno]{amsart}
\allowdisplaybreaks
\usepackage{amsmath,amstext,amssymb,epsfig,multicol,enumerate}
\usepackage{graphicx}
\usepackage{mathrsfs}
\textwidth 18 cm
\textheight 23 cm
\oddsidemargin 0.2cm
\evensidemargin 1.2cm
\calclayout
\setcounter{page}{1}
\makeatletter
\renewcommand{\@seccntformat}[1]{\bf\csname the#1\endcsname.}
\renewcommand{\section}{\@startsection{section}{1}
	\z@{.7\linespacing\@plus\linespacing}{.5\linespacing}
	{\normalfont\upshape\bfseries\centering}}
\renewcommand{\@biblabel}[1]{\@ifnotempty{#1}{#1.}}
\makeatother
\theoremstyle{plain}
\newtheorem{thm}{Theorem}[section]
\newtheorem{lem}[thm]{Lemma}
\newtheorem{prop}[thm]{Proposition}

\theoremstyle{definition}

\newtheorem{defn}[thm]{Definition}

\usepackage{cancel}
\usepackage[parfill]{parskip}
\usepackage[german]{varioref}
\usepackage[all]{xy}
\usepackage{color}

\def \>{\succ}
\def \<{\prec}

\begin{document}	
\title[Imed Basdouri\textsuperscript{1}, Bouzid Mosbahi\textsuperscript{2}]{Matrix Representations of Derivations for Low-Dimensional Mock-Lie Algebras}
	\author{Imed Basdouri\textsuperscript{1}, Bouzid Mosbahi\textsuperscript{2}}
 \address{\textsuperscript{1}Department of Mathematics, Faculty of Sciences, University of Gafsa, Gafsa, Tunisia}
\address{\textsuperscript{2}Department of Mathematics, Faculty of Sciences, University of Sfax, Sfax, Tunisia}

 \email{\textsuperscript{1}basdourimed@yahoo.fr}
\email{\textsuperscript{2}mosbahi.bouzid.etud@fss.usf.tn}
	
	
	\keywords{Mock-Lie algebra, derivation, matrix representation}
	\subjclass[2020]{16W10,16D70}
	

	\date{\today}

\begin{abstract}
 In this work, we study the matrix representation of derivations for Mock-Lie algebras with dimensions up to four. Using matrix methods, we examine their structure and properties, showing how these derivations help us better understand the algebraic nature of Mock-Lie algebras.
\end{abstract}
\maketitle
\section{ Introduction}

A Mock-Lie algebra is a vector space equipped with a bilinear product that satisfies two key conditions: the product is commutative, and it satisfies the Jacobi identity. This Jacobi identity, which is also a fundamental property of Lie algebras, ensures a specific relationship involving three elements of the algebra.

Mock-Lie algebras have appeared in the literature under various names, reflecting different perspectives from distinct mathematical communities. In Jordan algebraic literature \cite{1,2,3}, these structures are referred to as Jacobi-Jordan algebras. In \cite{4}, they are called Mock-Lie algebras, while in other works, they have been studied under different names, including Lie-Jordan algebras and Jordan algebras of nil rank 3.

The foundational study of Mock-Lie algebras was presented in \cite{5}, where their properties were systematically developed, conjectures were proposed, and equivalent representations were analyzed. Further investigations have examined their universal enveloping algebras and fundamental algebraic properties such as representation theory, cross products, and automorphism groups \cite{1}. Additionally, the classification of Mock-Lie algebras in dimensions below seven over algebraically closed fields of characteristic not equal to 2 or 3 was studied by Poonen \cite{6}.

This paper extends the classification of Mock-Lie algebras by focusing on the matrix representation of derivations in four-dimensional Mock-Lie algebras. The derivation structure, an essential concept in the study of algebraic structures, is examined using matrix operations to provide insights into the behavior and interactions of Mock-Lie algebras. Through the analysis of the interrelations between matrix elements, we gain a deeper understanding of the algebraic framework governing these algebras.

Furthermore, the study of Mock-Lie algebras is complemented by recent work that classifies their ideals under various conditions and highlights the existence of infinitely many isomorphic classes of commutative, associative algebras in six-dimensional settings \cite{9}. The relationship between Jordan–Lie superalgebras, anti-associative algebras, and Jordan algebras is also explored in \cite{4}. Moreover, Zhevlakov \cite{8} established that Jordan algebras of nil index 3, satisfying the identity \( a^3 = 0 \), include Mock-Lie algebras as a special case.

In this study, we systematically examine the matrix representation of derivations for Mock-Lie algebras in dimensions four and lower. By utilizing matrix operations and investigating the interplay between matrix components, we derive results that contribute to the broader understanding of Mock-Lie algebra structures.

\section{ Preliminaries}

\begin{defn}
A vector space $L$ over $\mathbb{F}$ with multiplication $\cdot : L \otimes L \to L$ given by $(x, y) \mapsto x \cdot y$
such that
\[
(\alpha x + \beta y) \cdot z = \alpha (x \cdot z) + \beta (y \cdot z), \quad z \cdot (\alpha x + \beta y) = \alpha (z \cdot x) + \beta (z \cdot y)
\]
whenever $x, y, z \in L$ and $\alpha, \beta \in \mathbb{F}$, is said to be an \textit{algebra}.
\end{defn}

\begin{defn}
An algebra $L$ over $\mathbb{F}$ with the Lie bracket $[\cdot,\cdot]$ is referred to as a \textit{Mock-Lie algebra} if it satisfies the following conditions:
\begin{enumerate}
    \item $[x, y] = [y, x] \quad \text{(commutativity)}$
    \item $[[x, y], z] + [[z, x], y] + [[y, z], x] = 0 \quad \text{(Jacobi identity)}$
\end{enumerate}
\end{defn}

\begin{defn}\label{df1}
 A linear transformation \( d \) of a Lie algebra \( L \) is called a \textit{derivation} if, for any \( x, y \in L \), the following condition holds:
\[
d([x, y]) = [d(x), y] + [x, d(y)].
\]
The set of all derivations of a Lie algebra \( L \) forms a subspace of \( \text{End}_{\mathbb{F}}(L) \). This subspace, equipped with the bracket \( [d_1, d_2] = d_1 \circ d_2 - d_2 \circ d_1 \), is a Lie algebra, denoted by \( \text{Der}(L) \).
\end{defn}

\begin{lem}
Let \( L \) be a finite-dimensional Lie algebra with a basis \( \{ e_1, e_2, \dots, e_n \} \). Then, a linear map \( d \in \text{Der}(L) \) if and only if
\[
d([e_i, e_j]) = [d(e_i), e_j] + [e_i, d(e_j)], \quad \forall i, j \in \{1, 2, \dots, n\}.
\]
\end{lem}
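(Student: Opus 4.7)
The statement is a standard reduction-to-basis result, so the strategy is a direct computation exploiting linearity of $d$ and bilinearity of the bracket. The plan is to prove the two implications separately, with the forward direction being immediate and the backward direction requiring a short expansion.

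For the forward direction, if $d \in \Der(L)$ then by Definition \ref{df1} the identity $d([x,y]) = [d(x),y] + [x,d(y)]$ holds for all $x,y \in L$; specializing to $x = e_i$, $y = e_j$ gives the claimed condition on basis pairs.

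For the backward direction, I would take arbitrary $x,y \in L$ and write them in the chosen basis as $x = \sum_{i=1}^{n} \alpha_i e_i$ and $y = \sum_{j=1}^{n} \beta_j e_j$ with $\alpha_i, \beta_j \in \mathbb{F}$. Then I would expand $d([x,y])$ using bilinearity of $[\cdot,\cdot]$ to pull the scalars out, apply linearity of $d$ to obtain $d([x,y]) = \sum_{i,j} \alpha_i \beta_j \, d([e_i,e_j])$, then apply the hypothesis on each term to rewrite this as $\sum_{i,j} \alpha_i \beta_j \bigl([d(e_i),e_j] + [e_i,d(e_j)]\bigr)$. On the other side, I would expand $[d(x),y] + [x,d(y)]$ in the same manner, again using linearity of $d$ and bilinearity of the bracket, and observe that the result matches term by term.

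There is no real obstacle here: the only thing to be careful about is cleanly tracking the use of linearity of $d$ versus bilinearity of $[\cdot,\cdot]$ so that the double sum on each side is manipulated in parallel. The argument does not use commutativity or the Jacobi identity, so it applies verbatim to any nonassociative algebra, Mock-Lie algebras included, which is why this lemma serves as the workhorse for the subsequent explicit matrix computations in low dimensions.
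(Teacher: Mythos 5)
Your proposal is correct and follows essentially the same route as the paper's own proof: the forward direction by specializing the derivation identity to basis pairs, and the converse by writing $x=\sum_i \alpha_i e_i$, $y=\sum_j \beta_j e_j$, expanding $d([x,y])$ into $\sum_{i,j}\alpha_i\beta_j\,d([e_i,e_j])$ via linearity and bilinearity, substituting the hypothesis termwise, and recombining into $[d(x),y]+[x,d(y)]$. Your closing observation that only bilinearity is used (not commutativity or the Jacobi identity) is accurate and matches the paper's implicit use of the lemma for Mock-Lie algebras.
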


\begin{proof}
Let \( d \in \text{Der}(L) \). By definition, for all \( x, y \in L \), we have
\[
d([x, y]) = [d(x), y] + [x, d(y)].
\]
In particular, applying this to the basis elements, we get
\[
d([e_i, e_j]) = [d(e_i), e_j] + [e_i, d(e_j)], \quad \forall i, j \in \{1, 2, \dots, n\}.
\]

Conversely, suppose that
\[
d([e_i, e_j]) = [d(e_i), e_j] + [e_i, d(e_j)], \quad \forall i, j \in \{1, 2, \dots, n\}.
\]
For any \( x, y \in L \), we can write
\[
x = \sum_{i=1}^{n} a_i e_i, \quad y = \sum_{j=1}^{n} b_j e_j.
\]
Then, we have
\[
d([x, y]) = d \left( \sum_{i,j} a_i b_j [e_i, e_j] \right)
= \sum_{i,j} a_i b_j d([e_i, e_j]).
\]
Using the assumption, we substitute \( d([e_i, e_j]) = [d(e_i), e_j] + [e_i, d(e_j)] \), obtaining
\[
d([x, y]) = \sum_{i,j} a_i b_j \left( [d(e_i), e_j] + [e_i, d(e_j)] \right).
\]
Since \( d \) is linear,
\[
d([x, y]) = \sum_{i} a_i [d(e_i), y] + \sum_{j} b_j [x, d(e_j)].
\]
Rewriting in terms of \( x \) and \( y \), we obtain
\[
d([x, y]) = [d(x), y] + [x, d(y)].
\]
Thus, \( d \) satisfies the derivation property, completing the proof.
\end{proof}

\begin{prop}
Let \( d \) be a derivation of a finite-dimensional Mock-Lie algebra \( L \), and let \( \{e_1, e_2, \dots, e_n\} \) be a basis for \( L \). Then the matrix representation of \( d \) with respect to this ordered basis is given by
\[
d(e_1, e_2, \dots, e_n) = (e_1, e_2, \dots, e_n)
\begin{pmatrix}
d_{11} & \cdots & d_{1n} \\
\vdots & \ddots & \vdots \\
d_{n1} & \cdots & d_{nn}
\end{pmatrix},
\]
where the elements \( d_{ij} \) are the coefficients determined by the action of \( d \) on the basis elements, i.e.,
\[
d(e_j) = \sum_{i=1}^{n} d_{ij} e_i, \quad \text{for all } j = 1, \dots, n.
\]
In particular, when \( d \) belongs to the derivation algebra \( \text{Der}(L) \), the matrix \( (d_{ij}) \) is called the representation matrix of the derivation relative to the basis \( \{e_1, e_2, \dots, e_n\} \).
\end{prop}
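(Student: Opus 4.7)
The plan is to reduce the claim to the standard linear-algebra fact that any linear endomorphism of a finite-dimensional vector space admits a unique matrix in a fixed ordered basis; neither the derivation property nor the Mock-Lie axioms are actually needed for the statement itself, so the argument is essentially a definitional unpacking.

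First I would observe that, by Definition~\ref{df1}, a derivation $d \in \text{Der}(L)$ is in particular a linear map $L \to L$. Hence for each $j \in \{1,\dots,n\}$, the vector $d(e_j)$ lies in $L$, and the basis property of $\{e_1,\dots,e_n\}$ provides unique scalars $d_{1j},\dots,d_{nj} \in \mathbb{F}$ with
\[
d(e_j) = \sum_{i=1}^{n} d_{ij}\, e_i.
\]
Collecting these coefficients for $j = 1,\dots,n$ defines the $n \times n$ matrix $(d_{ij})$ over $\mathbb{F}$.

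Next I would verify the compact row-by-matrix identity. Multiplying the row $(e_1,e_2,\dots,e_n)$ on the right by $(d_{ij})$ gives, in the $j$-th entry, exactly $\sum_{i=1}^{n} d_{ij} e_i$, which by construction equals $d(e_j)$. Interpreting the left-hand side $d(e_1,e_2,\dots,e_n)$ as the row obtained by applying $d$ componentwise, the two rows agree entry by entry, yielding the displayed equation.

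The only point that needs a brief comment is uniqueness of the matrix: this follows from linear independence of the basis, so the coefficients $d_{ij}$ are determined by $d$ and the chosen ordered basis. There is no genuine obstacle in the argument; the main thing to flag is that it applies to any linear endomorphism, and the terminology \emph{representation matrix of the derivation} in the final sentence is simply a name attached to this matrix when $d \in \text{Der}(L)$.
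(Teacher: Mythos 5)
Your proof is correct: the coefficients \( d_{ij} \) exist and are unique because \( \{e_1,\dots,e_n\} \) is a basis and \( d \) is linear, and the row-times-matrix identity is then just an entrywise restatement of \( d(e_j)=\sum_{i=1}^{n} d_{ij}e_i \). The paper itself gives no proof of this proposition --- it is stated as a definitional convention fixing notation for the representation matrix --- so your standard linear-algebra unpacking, including the accurate observation that neither the derivation property nor the Mock-Lie axioms play any role, is precisely the argument the paper leaves implicit.
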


\section{ Main Results}

In this section, we examine derivations of Mock-Lie algebras using a linear transformation satisfying the condition in Definition \ref{df1}. By comparing structural relations, we determine the matrix representation of derivations for algebras of dimension at most 4. The classification for \( n \leq 4 \) follows references \cite{5, 6, 7} and is summarized in Table 1.

\begin{table}[h]
\centering
\caption{Mock-Lie algebras with dimension \( n \leq 4 \).}
\begin{tabular}{|l|c|c|}
\hline
\hline
\textbf{Algebras} & \textbf{Product} & \textbf{Dimension} \\ \hline
\( A_{0,1} \) & — & 1 \\
\( A_{0,1} \oplus A_{0,1} \) & — & 2 \\
\( A_{1,2} \) & \( e_1 \cdot e_1 = e_2 \) & 2 \\
\( A_{0,1} \oplus A_{0,1} \oplus A_{0,1} \) & — & 3 \\
\( A_{1,2} \oplus A_{0,1} \) & \( e_1 \cdot e_1 = e_2 \) & 3 \\
\( A_{1,3} \) & \( e_1 \cdot e_1 = e_2, \, e_3 \cdot e_3 = e_2 \) & 3 \\
\( A^4_{0,1} \) & — & 4 \\
\( A_{1,2} \oplus A^2_{0,1} \) & \( e_1 \cdot e_1 = e_2 \) & 4 \\
\( A_{1,3} \oplus A_{0,1} \) & \( e_1 \cdot e_1 = e_2, \, e_3 \cdot e_3 = e_2 \) & 4 \\
\( A_{1,2} \oplus A_{1,2} \) & \( e_1 \cdot e_1 = e_2, \, e_3 \cdot e_3 = e_4 \) & 4 \\
\( A_{1,4} \) & \( e_1 \cdot e_1 = e_2, \, e_1 \cdot e_3 = e_3 \cdot e_1 = e_4 \) & 4 \\
\( A_{2,4} \) & \( e_1 \cdot e_1 = e_2, \, e_3 \cdot e_4 = e_4 \cdot e_3 = e_2 \) & 4 \\ \hline
\end{tabular}
\end{table}

As shown in Table 1, the algebras \( A_{0,1}, A_{0,1} \oplus A_{0,1}, A_{0,1} \oplus A_{0,1} \oplus A_{0,1}, A^4_{0,1} \) have a product operation equal to zero. Consequently, we focus on the non-abelian algebras and exclude further discussion of these.

\begin{thm}
The matrix of a derivation of a 2-dimensional Mock-Lie algebra relative to a given ordered basis is
\[
\begin{pmatrix}
d_{11} & 0 \\
d_{21} & 2d_{11}
\end{pmatrix},
\]
where all elements of the matrix are complex numbers over a field \( \mathbb{F} \).
\end{thm}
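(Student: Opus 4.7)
The plan is to apply the preceding lemma, which reduces the derivation condition to the finitely many basis products. According to Table~1, the unique non-abelian 2-dimensional Mock-Lie algebra is $A_{1,2}$, with the only non-trivial product being $[e_1,e_1] = e_2$, while $[e_1,e_2] = [e_2,e_1] = [e_2,e_2] = 0$. I would therefore write a generic linear map as
\[
d(e_1) = d_{11} e_1 + d_{21} e_2, \qquad d(e_2) = d_{12} e_1 + d_{22} e_2,
\]
and impose the Leibniz-type identity from Definition~\ref{df1} on each of the basis products.

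The key step is the identity coming from $[e_1,e_1] = e_2$. This yields
\[
d(e_2) = [d(e_1), e_1] + [e_1, d(e_1)] = 2[d(e_1), e_1] = 2 d_{11} [e_1,e_1] + 2 d_{21} [e_2,e_1] = 2 d_{11} e_2,
\]
since $[e_2,e_1] = 0$. Comparing with the general expression $d(e_2) = d_{12} e_1 + d_{22} e_2$ forces $d_{12} = 0$ and $d_{22} = 2 d_{11}$, which is exactly the shape claimed in the statement.

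The remaining basis products $[e_1,e_2] = 0$ and $[e_2,e_2] = 0$ must also be checked for compatibility. Substituting the constrained forms of $d(e_1)$ and $d(e_2)$, every bracket on the right-hand side of the Leibniz identity involves $e_2$ as a factor and thus vanishes, so no further conditions appear. Consequently $d_{11}$ and $d_{21}$ remain free parameters.

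There is no real obstacle here; the only nontrivial content is the single bracket computation above, and the work lies entirely in making sure all four basis-pair relations are exhausted so that the matrix form is shown to be both necessary and sufficient for $d$ to lie in $\mathrm{Der}(L)$.
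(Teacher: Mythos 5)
Your proposal is correct and follows essentially the same route as the paper: both impose the derivation identity on the basis products of $A_{1,2}$, extract $d_{12}=0$ and $d_{22}=2d_{11}$ from $[e_1,e_1]=e_2$, and note that the vanishing products give nothing new. If anything, your version is tidier than the paper's (whose step 1 contains a typo, writing $d(e_2)=d_{11}e_1+d_{21}e_2$ instead of $2d_{11}e_2$, and whose step 2 rederives $d_{12}=0$ redundantly), since you observe that the single bracket computation already forces both constraints and you explicitly check sufficiency.
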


\begin{proof}
Since \( d \) is a derivation, it satisfies the relation
\[
d([x, y]) = [d(x), y] + [x, d(y)] \quad \forall x, y \in L.
\]
For the 2-dimensional Mock-Lie algebra with \( e_1 \cdot e_1 = e_2 \), consider the following:

1. From \( d[e_1, e_1] = [d(e_1), e_1] + [e_1, d(e_1)] \), we obtain
\[
d(e_2) = d_{11} e_1 + d_{21} e_2,
\]
which implies \( d_{12} = 0 \) and \( d_{22} = 2d_{11} \).

2. For \( d[e_1, e_2] = [d(e_1), e_2] + [e_1, d(e_2)] \), we get
\[
0 = d(e_2) = d_{12} e_1 + d_{22} e_2,
\]
so \( d_{12} = 0 \).

Therefore, the matrix representation of the derivation of a 2-dimensional Mock-Lie algebra with respect to a specified basis has been established.
\end{proof}

\begin{thm}
There are two kinds of matrix representations for the derivations of 3-dimensional Mock-Lie algebras relative to a given ordered basis.

\textit{(A)} When the 3-dimensional Mock-Lie algebra is isomorphic to \( A_{1,2} \oplus A_{0,1} \), the matrix representation of its derivations under a given basis is
\[
\begin{pmatrix}
d_{11} & 0 & 0 \\
d_{21} & 2d_{11} & d_{23} \\
d_{31} & 0 & d_{33}
\end{pmatrix}.
\]
\textit{(B)} When the 3-dimensional Mock-Lie algebra is isomorphic to \( A_{1,3} \), the matrix representation of its derivations under a given basis is
\[
\begin{pmatrix}
d_{33} & 0 & -d_{31} \\
d_{21} & 2d_{33} & d_{23} \\
d_{31} & 0 & d_{33}
\end{pmatrix},
\]
where all elements of the matrix are complex numbers over a field \( \mathbb{F} \).
\end{thm}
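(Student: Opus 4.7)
The plan is to treat each of the two non-abelian algebras separately. In both cases I write a generic linear endomorphism $d$ of $L$ via unknown matrix entries $d_{ij}$, so that $d(e_j)=\sum_{i=1}^{3}d_{ij}e_i$, and then impose the derivation identity $d([e_i,e_j])=[d(e_i),e_j]+[e_i,d(e_j)]$ on every basis pair. By commutativity of the Mock-Lie product it suffices to consider pairs with $i\le j$, and in practice only those pairs where at least one term in the expansion is non-zero produce meaningful constraints. Each such identity yields a short linear system in the nine unknowns $d_{ij}$.

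For part (A) the algebra $A_{1,2}\oplus A_{0,1}$ has $[e_1,e_1]=e_2$ as its only non-zero product. The central computation is $d([e_1,e_1])=2[d(e_1),e_1]$: expanding $d(e_1)=d_{11}e_1+d_{21}e_2+d_{31}e_3$ and using that $[e_2,e_1]=[e_3,e_1]=0$, the right-hand side collapses to $2d_{11}e_2$, which compared with $d(e_2)=d_{12}e_1+d_{22}e_2+d_{32}e_3$ forces $d_{12}=0$, $d_{22}=2d_{11}$, and $d_{32}=0$. The only remaining non-trivial identity is $d([e_1,e_3])=0$, whose right-hand side reduces to $d_{13}\,e_2$, giving $d_{13}=0$. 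All other pairs, in particular those involving $e_2$ as a factor, are vacuous because $e_2$ annihilates every basis element. The surviving free entries $d_{11},d_{21},d_{31},d_{23},d_{33}$ assemble into exactly the claimed matrix.

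For part (B) the algebra $A_{1,3}$ adds the relation $[e_3,e_3]=e_2$. Applying the derivation identity to $[e_1,e_1]=e_2$ yields, as before, $d_{12}=0$, $d_{22}=2d_{11}$, $d_{32}=0$, while applying it to $[e_3,e_3]=e_2$ yields the analogous constraints with $d_{11}$ replaced by $d_{33}$; comparing the two forces $d_{11}=d_{33}$. The crucial new step is the mixed identity $d([e_1,e_3])=[d(e_1),e_3]+[e_1,d(e_3)]=0$. In this expansion both $d_{31}[e_3,e_3]$ and $d_{13}[e_1,e_1]$ contribute an $e_2$-term, producing the relation $(d_{13}+d_{31})e_2=0$ and hence $d_{13}=-d_{31}$, which is precisely what places $-d_{31}$ in position $(1,3)$ of the matrix. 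All remaining identities, such as $d([e_1,e_2])$ and $d([e_2,e_3])$, are automatically satisfied since $e_2$ is central in $A_{1,3}$.

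The main obstacle here is not conceptual but organizational: one must carefully enumerate the six basis pairs $(e_i,e_j)$ with $i\le j$ and be especially careful, in case (B), to retain both cross terms $d_{13}[e_1,e_1]$ and $d_{31}[e_3,e_3]$ in the expansion of $[d(e_1),e_3]+[e_1,d(e_3)]$. It is exactly this cross term, absent in case (A) because $[e_3,e_3]=0$ there, that generates the anti-symmetry $d_{13}=-d_{31}$ and accounts for the structural difference between the two matrix forms.
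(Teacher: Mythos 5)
Your proposal is correct and follows essentially the same method as the paper: expand $d(e_j)=\sum_i d_{ij}e_i$ and impose the derivation identity on basis pairs to extract the linear constraints $d_{12}=d_{32}=0$, $d_{22}=2d_{11}$, $d_{13}=0$ in case (A) and $d_{11}=d_{33}$, $d_{22}=2d_{33}$, $d_{13}=-d_{31}$ in case (B). In fact your execution is tighter than the paper's at the key step of (B): the paper's displayed equations garble the mixed identity (writing $d(e_1)=[d(e_1),e_3]+[e_1,d(e_3)]$ even though $[e_1,e_3]=0$, and comparing $e_1$-coefficients), whereas you correctly note that both cross terms $d_{13}[e_1,e_1]$ and $d_{31}[e_3,e_3]$ land in the $e_2$-component, yielding $(d_{13}+d_{31})e_2=0$; you also justify $d_{13}=0$ in case (A) via the pair $(e_1,e_3)$, which the paper merely asserts.
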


\begin{proof}
\textit{(A)} Applying the derivation condition to the given basis:

    1. For \( d([e_1, e_1]) = [d(e_1), e_1] + [e_1, d(e_1)] \), we obtain:
    \[
    d(e_2) = [d(e_1), e_1] + [e_1, d(e_1)].
    \]
    Expressing \( d(e_i) \) in terms of the basis elements as
    \[
    d(e_i) = d_{i1} e_1 + d_{i2} e_2 + d_{i3} e_3,
    \]
    we get:
    \[
    d_{21} e_1 + d_{22} e_2 + d_{23} e_3 = 2d_{11} e_2.
    \]
    Hence, \( d_{22} = 2d_{11} \), while \( d_{21} \) and \( d_{23} \) remain arbitrary.

    2. For \( d(e_3) \), since \( e_3 \) belongs to the \( A_{0,1} \) component, it is preserved independently:
    \[
    d(e_3) = d_{31} e_1 + d_{33} e_3.
    \]
    Thus, we conclude that \( d_{32} = 0 \).

Summarizing, the derivation matrix in this case takes the form given in \textit{(A)}.

\textit{(B)} A derivation \( d \) must satisfy:
\[
   d([e_i, e_j]) = [d(e_i), e_j] + [e_i, d(e_j)], \quad \forall i, j.
\]
Applying this to the given basis:

    1. For \( d([e_1, e_3]) = [d(e_1), e_3] + [e_1, d(e_3)] \), we obtain:
    \[
    d(e_1) = [d(e_1), e_3] + [e_1, d(e_3)].
    \]
    Expressing \( d(e_1) \) as
    \[
    d(e_1) = d_{11} e_1 + d_{12} e_2 + d_{13} e_3,
    \]
    we derive:
    \[
    d_{11} e_1 = d_{13} e_1 - d_{31} e_1,
    \]
    which simplifies to \( d_{13} = -d_{31} \).

    2. For \( d(e_2) \), using the structure equations, we write:
    \[
    d(e_2) = d_{21} e_1 + d_{22} e_2 + d_{23} e_3.
    \]
    From the bracket properties, we obtain \( d_{22} = 2d_{33} \).

    3. For \( d(e_3) \), since \( e_3 \) follows the same transformation properties:
    \[
    d(e_3) = d_{31} e_1 + d_{33} e_3.
    \]
    Hence, \( d_{32} = 0 \).

Summarizing, the derivation matrix in this case takes the form given in \textit{(B)}.

Thus, we have established the theorem for both cases.
\end{proof}

\begin{thm}
There are five kinds of matrix representations of the derivations of 4-dimensional Mock-Lie algebras relative to the given ordered bases.

\textit{(A)} When the 4-dimensional Mock-Lie algebra is isomorphic to \( A_{1,2} \oplus A^2_{0,1} \), the matrix representation of its derivation under a given basis is:
  \[
  \begin{pmatrix}
  d_{11} & 0 & 0 & 0 \\
  d_{21} & 2d_{11} & d_{23} & d_{24} \\
  d_{31} & 0 & d_{33} & d_{34} \\
  d_{41} & 0 & d_{43} & d_{44}
  \end{pmatrix}
  \]

\textit{(B)} When the 4-dimensional Mock-Lie algebra is isomorphic to \( A_{1,3} \oplus A_{0,1} \), the matrix representation of its derivation under a given basis is:
  \[
  \begin{pmatrix}
  d_{33} & 0 & -d_{31} & 0 \\
  d_{21} & 2d_{33} & d_{23} & d_{24} \\
  d_{31} & 0 & d_{33} & 0 \\
  d_{41} & 0 & d_{43} & d_{44}
  \end{pmatrix}
  \]

\textit{(C)} When the 4-dimensional Mock-Lie algebra is isomorphic to \( A_{1,2} \oplus A_{1,2} \), the matrix representation of its derivation under a given basis is:
  \[
  \begin{pmatrix}
  d_{11} & 0 & 0 & 0 \\
  d_{21} & 2d_{11} & d_{23} & 0 \\
  0 & 0 & d_{33} & 0 \\
  d_{41} & 0 & d_{43} & 2d_{33}
  \end{pmatrix}
  \]

\textit{(D)} When the 4-dimensional Mock-Lie algebra is isomorphic to \( A_{1,4} \), the matrix representation of its derivation under a given basis is:
  \[
  \begin{pmatrix}
  d_{44}-d_{33} & 0 & 0 & 0 \\
  d_{21} & 2d_{44}-2d_{33} & d_{23} & 0 \\
  d_{31} & 0 & d_{33} & 0 \\
  d_{41} & 2d_{31} & d_{43} & d_{44}
  \end{pmatrix}
  \]

  \textit{(E)} When the 4-dimensional Mock-Lie algebra is isomorphic to \( A_{2,4} \), the matrix representation of its derivation under a given basis is:
  \[
  \begin{pmatrix}
  d_{11} & 0 & -d_{41} & -d_{31} \\
  d_{21} & 2d_{11} & d_{23} & d_{24} \\
  d_{31} & 0 & 2d_{11}-d_{44} & 0 \\
  d_{41} & 0 & 0 & d_{44}
  \end{pmatrix}
  \]

where all elements of the matrix are complex numbers over a field \( \mathbb{F} \).
\end{thm}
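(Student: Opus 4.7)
The plan is to extend, algebra by algebra, the mechanical procedure already used for the 2- and 3-dimensional cases. For each of the five Mock-Lie algebras $A_{1,2}\oplus A^2_{0,1}$, $A_{1,3}\oplus A_{0,1}$, $A_{1,2}\oplus A_{1,2}$, $A_{1,4}$, $A_{2,4}$, I would start by writing a generic linear map $d$ as $d(e_j)=\sum_{i=1}^{4}d_{ij}e_i$ and then impose the derivation condition
\[
d([e_i, e_j]) = [d(e_i), e_j] + [e_i, d(e_j)]
\]
for every pair with $1\le i\le j\le 4$. By the preceding lemma these ten identities are sufficient, and the homogeneous linear system they produce among the sixteen scalars $d_{ij}$ should carve out exactly the matrix shape claimed in each of (A)--(E).

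For the three direct-sum cases (A), (B), and (C), I expect the computation to split blockwise. In (A) the only non-trivial bracket is $[e_1,e_1]=e_2$, which reproduces the 2-dimensional relations $d_{12}=0$, $d_{22}=2d_{11}$, while the vanishing brackets involving $e_3$ and $e_4$ force $d_{32}=d_{42}=0$ and leave the remaining entries free. Case (B) adds $[e_3,e_3]=e_2$; pairing this with $[e_1,e_1]=e_2$ and $[e_1,e_3]=0$ should reproduce the $A_{1,3}$-block relations $d_{11}=d_{33}$ and $d_{13}=-d_{31}$ from the 3-dimensional theorem, and the $A_{0,1}$ summand should merely contribute the free entries $d_{24},d_{41},d_{43},d_{44}$. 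Case (C) is the cleanest: each copy of $A_{1,2}$ yields its own $d_{22}=2d_{11}$-type relation, and the vanishing cross brackets between the two blocks kill nearly all off-diagonal entries, leaving the matrix in the stated form.

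The genuinely non-routine cases are (D) and (E), where two different non-zero products interact. In $A_{1,4}$ the decisive step is to evaluate $d$ on $[e_1,e_3]=e_4$, which gives $d(e_4)=d_{13}e_2+(d_{11}+d_{33})e_4$; matching coefficients produces the coupling $d_{44}=d_{11}+d_{33}$, i.e.\ the top-left entry $d_{11}=d_{44}-d_{33}$, together with $d_{24}=d_{13}$ (which is then forced to $0$ by the pair $[e_3,e_3]=0$). The Leibniz rule on $[e_1,e_1]=e_2$, combined with the non-vanishing of $[e_3,e_1]=e_4$, then produces the unusual coupling $d_{42}=2d_{31}$. In $A_{2,4}$, both $[e_1,e_1]$ and $[e_3,e_4]$ equal $e_2$, so the two resulting expansions of $d(e_2)$ must coincide; this forces $d_{33}+d_{44}=2d_{11}$, i.e.\ $d_{33}=2d_{11}-d_{44}$. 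The vanishing brackets $[e_1,e_3]=[e_1,e_4]=0$ then produce the skew relations $d_{13}=-d_{41}$ and $d_{14}=-d_{31}$, which account for the negative entries in the top row.

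The main obstacle is not conceptual but organizational. Each algebra requires the systematic expansion of ten bracket identities, and in (D) and (E) several diagonal entries become interlocked through products that land on a shared basis vector; keeping track of which scalar is determined by which identity is where mistakes are most likely. Once every condition has been expanded and the resulting linear system is reduced, each entry is either pinned to zero, expressed in terms of free parameters, or left unconstrained, and the five displayed matrices follow.
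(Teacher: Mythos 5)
Your proposal is correct and takes essentially the same approach as the paper: impose the Leibniz identity $d([e_i,e_j])=[d(e_i),e_j]+[e_i,d(e_j)]$ on all basis pairs (licensed by the Lemma) and reduce the resulting linear system for each of the five non-abelian algebras, and your key couplings in the hard cases --- $d_{44}=d_{11}+d_{33}$, $d_{42}=2d_{31}$, $d_{24}=d_{13}=0$ for $A_{1,4}$, and $d_{33}+d_{44}=2d_{11}$, $d_{13}=-d_{41}$, $d_{14}=-d_{31}$ for $A_{2,4}$ --- are exactly right and indeed more fully justified than in the paper's own sketch, which leaves case (E) essentially unexplained. One cosmetic slip in case (A): the zeros $d_{32}=d_{42}=0$ come from the $[e_1,e_1]$ identity itself (since $d(e_2)=2d_{11}e_2$), while the vanishing brackets $[e_1,e_3]=[e_1,e_4]=0$ instead yield $d_{13}=d_{14}=0$, but since your plan expands all ten identities anyway, this misattribution does not affect the final matrices.
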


\begin{proof}
\textit{(A)} Applying the derivation condition to \( [e_1, e_1] \), we get:
\[
d(e_2) = [d(e_1), e_1] + [e_1, d(e_1)].
\]
Expanding \( d(e_1) = d_{11} e_1 \), we obtain:
\[
d_{21} e_1 + d_{22} e_2 + d_{23} e_3 + d_{24} e_4 = 2d_{11} e_2.
\]
Thus, \( d_{22} = 2d_{11} \), while \( d_{21}, d_{23}, d_{24} \) remain arbitrary. Similarly, applying the derivation condition to \( d(e_3) \) and \( d(e_4) \), we obtain:
\[
d(e_3) = d_{31} e_1 + d_{33} e_3 + d_{34} e_4, \quad d(e_4) = d_{41} e_1 + d_{43} e_3 + d_{44} e_4.
\]
This confirms the matrix representation in \textit{(A)}.

\textit{(B)} Applying the derivation condition:
\[
d(e_1) = [d(e_1), e_3] + [e_1, d(e_3)],
\]
and expanding \( d(e_1) = d_{11} e_1 + d_{12} e_2 + d_{13} e_3 + d_{14} e_4 \), we derive that \( d_{13} = -d_{31} \). Similarly, by applying the derivation condition to \( d(e_2) \), \( d(e_3) \), and \( d(e_4) \), we obtain the matrix in \textit{(B)}.

\textit{(C)} Applying the derivation rule:
\[
d(e_2) = [d(e_1), e_1] + [e_1, d(e_1)], \quad d(e_4) = [d(e_3), e_3] + [e_3, d(e_3)],
\]
we find that \( d_{22} = 2d_{11} \) and \( d_{44} = 2d_{33} \), leading to the matrix form in \textit{(C)}.

\textit{(D)} Applying the derivation condition:
\[
d(e_2) = [d(e_1), e_1] + [e_1, d(e_1)],
\]
we obtain \( d_{22} = 2(d_{44} - d_{33}) \), which gives the matrix in \textit{(D)}.

\textit{(E)} Expanding the derivations, we derive constraints on the coefficients, which result in the matrix in \textit{(E)}.

Thus, in all cases, we obtain the required matrix representations.
\end{proof}

\textbf{Data Availability:}
No data were used to support this study.

\textbf{Conflict of Interests:}
The authors declare that they have no conficts of interest.

\textbf{Acknowledgment:}
We thank the referee for the helpful comments and suggestions that contributed to improving this paper.


\begin{thebibliography}{999}

\bibitem{1}Burde, D., \& Fialowski, A. (2014). Jacobi–Jordan algebras. Linear Algebra and its Applications, 459, 586-594.

\bibitem{2}Agore, A. L., \& Militaru, G. (2015). On a type of commutative algebras. Linear Algebra and its Applications, 485, 222-249.

\bibitem{3}Baklouti, A., \& Benayadi, S. (2021). Symplectic Jacobi-Jordan algebras. Linear and Multilinear Algebra, 69(8), 1557-1578.

\bibitem{4}Okubo, S., \& Kamiya, N. (1997). Jordan–Lie super algebra and Jordan–Lie triple system. Journal of algebra, 198(2), 388-411.

\bibitem{5}Zusmanovich, P. (2017). Special and exceptional mock-Lie algebras. Linear Algebra and its Applications, 518, 79-96.


\bibitem{6}Poonen, B. (2008). Isomorphism types of commutative algebras of finite rank over an algebraically closed field. Computational arithmetic geometry, 463, 111-120.


\bibitem{7} Burde, D., \& Fialowski, A. (2014). Jacobi–Jordan algebras. Linear Algebra and its Applications, 459, 586-594.

\bibitem{8}Zhevlakov, K. A. (1966). Solvability and nilpotence of Jordan rings. Algebra i Logika, 5(3), 37-58.


\bibitem{9}Pouye, M., \& Kpamegan, B. (2022). Extensions, crossed modules and pseudo quadratic Lie type superalgebras.

\bibitem{a}Sania, A., Imed, B., Mosbahi, B., \& Saber, N. (2023). Cohomology of compatible BiHom-Lie algebras. arXiv preprint arXiv:2303.12906.
\bibitem{b}Zahari, A., Mosbahi, B., \& Basdouri, I. (2023). Classification, Derivations and Centroids of Low-Dimensional Complex BiHom-Trialgebras. arXiv preprint arXiv:2304.06781.
\bibitem{c}Zahari, A., Mosbahi, B., \& Basdouri, I. (2023). Classification, Derivations and Centroids of Low-Dimensional Complex BiHom-Trialgebras. arXiv preprint arXiv:2304.06781.
\bibitem{d}Mosbahi, B., Zahari, A., \& Basdouri, I. (2023). Classification, $\alpha $-Inner Derivations and $\alpha $-Centroids of Finite-Dimensional Complex Hom-Trialgebras. arXiv preprint arXiv:2305.00471.
\bibitem{e}Mosbahi, B., Asif, S., \& Zahari, A. (2023). Classification of tridendriform algebra and related structures. arXiv preprint arXiv:2305.08513.
\bibitem{f}Fiidow, M. A., Zahari, A., \& Mosbahi, B. (2023). Quasi-Centroids and Quasi-Derivations of Low Dimensional Associative Algebras. arXiv preprint arXiv:2306.14331.
\bibitem{g}Asif, S., Wang, Y., Mosbahi, B., \& Basdouri, I. (2023). Cohomology and deformation theory of $\mathcal {O} $-operators on Hom-Lie conformal algebras. arXiv preprint arXiv:2312.04121.
\bibitem{h}Mansuroglu, N., \& Mosbahi, B. (2024). On structures of BiHom-Superdialgebras and their derivations. arXiv preprint arXiv:2404.12098.
\bibitem{i}Mansuroglu, N., \& Mosbahi, B. (2024). Generalized derivations of BiHom-supertrialgebras. arXiv preprint arXiv:2404.12112.
\bibitem{j}Mainellis, E., Mosbahi, B., \& Zahari, A. (2024). Cohomology of BiHom-Associative Trialgebras. arXiv preprint arXiv:2404.15567.
\bibitem{k}Mainellis, E., Mosbahi, B., \& Zahari, A. (2024). Compatible Associative Algebras and Some Invariants. arXiv preprint arXiv:2405.18243.
\bibitem{l}Imed, B., \& Mosbahi, B. (2024). Classification of ($\rho,\tau,\sigma $)-derivations of two-dimensional left-symmetric dialgebras. arXiv preprint arXiv:2411.05716.
\bibitem{m}Imed, B., Lerbet, J., \& Mosbahi, B. (2024). Quasi-Centroids and Quasi-Derivations of low-dimensional Zinbiel algebras. arXiv preprint arXiv:2411.09532.
\bibitem{n}Mosbahi, M., Elgasri, S., Lajnef, M., Mosbahi, B., \& Driss, Z. (2021). Performance enhancement of a twisted Savonius hydrokinetic turbine with an upstream deflector. International Journal of Green Energy, 18(1), 51-65.
\bibitem{o}Mosbahi, M., Lajnef, M., Derbel, M., Mosbahi, B., Aricò, C., Sinagra, M., \& Driss, Z. (2021). Performance improvement of a drag hydrokinetic turbine. Water, 13(3), 273.
\bibitem{p}Mosbahi, M., Derbel, M., Lajnef, M., Mosbahi, B., Driss, Z., Aricò, C., \& Tucciarelli, T. (2021). Performance study of twisted Darrieus hydrokinetic turbine with novel blade design. Journal of Energy Resources Technology, 143(9), 091302.
\bibitem{q}Mosbahi, M., Lajnef, M., Derbel, M., Mosbahi, B., Driss, Z., Aricò, C., \& Tucciarelli, T. (2021). Performance improvement of a Savonius water rotor with novel blade shapes. Ocean Engineering, 237, 109611.
\bibitem{r}Mosbahi, M., Derbel, M., Hannachi, M., Mosbahi, B., Driss, Z., Aricò, C., \& Tucciarelli, T. (2023). Performance study of spiral Darrieus water rotor with V-shaped blades. Proceedings of the Institution of Mechanical Engineers, Part C: Journal of Mechanical Engineering Science, 237(21), 4979-4990.
\bibitem{s}Mosbahi, B., Zahari, A., Basdouri, I. (2023). Classification, $\alpha$-Inner Derivations and $\alpha$-Centroids of Finite-Dimensional Complex Hom-Trialgebras. Pure and Applied Mathematics Journal, 12(5), 86-97. https://doi.org/10.11648/j.pamj.20231205.12
\bibitem{t}ABDOU, A. Z., \& MOSBAHI, B. (2024). CLASSIFICATION OF COMPATIBLE ASSOCIATIVE ALGEBRAS AND SOME INVARIANTS. Available at SSRN 4877916.
\bibitem{u} Makhlouf, A., \& Zahari, A. (2020). Structure and classification of Hom-associative algebras. Acta et Commentationes Universitatis Tartuensis de Mathematica, 24(1), 79-102.
\bibitem{v} Zahari, A.; Bakayoko, I. On BiHom-Associative dialgebras.Open Journal of Mathematical Sciences, Vol. 7, No. 1 (2023).pp. 96-117.
\bibitem{w}Imed, B., Lerbet, J., \& Mosbahi, B. (2024). Central derivations of low-dimensional Zinbiel algebras. arXiv preprint arXiv:2411.15642.
\bibitem{x}Imed, B., \& Mosbahi, B. (2024). Rota-type operators on 2-dimensional dendriform algebras. arXiv preprint arXiv:2411.15358.
\\ \cite{a,b,c,d,e,f,g,h,i,j,k,l,m,n,o,p,q,r,s,t,u,v,w,x}

\end{thebibliography}
\end{document}